\newtheorem{theorem}{Theorem}
\newtheorem{lemma}[theorem]{Lemma}
\newtheorem{corollary}[theorem]{Corollary}
\newtheorem{proposition}[theorem]{Proposition}
\title[GCD sums and applications]{Refinements of G\'al's theorem and applications}
\author{Mark Lewko and Maksym Radziwi\l\l}
\address{Department of Mathematics, UCLA, Los Angeles CA 90095--1555, USA}
\email{mlewko@gmail.com}
\address{School of Mathematics, Institute for Advanced Study, 1 Einstein Drive,
Princeton, NJ, USA}
\email{maksym@math.ias.edu}
\thanks{The first author is supported by a NSF postdoctoral fellowship,
DMS-12042 and the IAS Fund for Mathematics, the second author was
partially supported by NSF grant DMS-1001068}
\date{\today}
\begin{document}

\begin{abstract}We give a simple proof of a well-known theorem of G\'al and of the recent related results of Aistleitner, Berkes and Seip \cite{ABS} regarding the size of GCD sums. In fact, our method obtains the asymptotically sharp constant in G\'al's theorem, which is new. Our approach also gives a transparent explanation of the relationship between the maximal size of the Riemann zeta function on vertical lines and bounds on GCD sums; a point which was previously unclear. Furthermore we obtain sharp bounds on the spectral norm of GCD matrices which settles a question raised in \cite{ABSW}.
We use bounds for the spectral norm to show that series
formed out of dilates of periodic functions of bounded variation converge
almost everywhere if the coefficients of the series are in $L^2 (\log\log 1/L)^{\gamma}$, with
$\gamma > 2$. This was previously known with $\gamma >4$, and is known to fail for $\gamma<2$. We also develop a sharp Carleson-Hunt-type theorem for functions of
bounded variations which settles another question raised in \cite{ABS}.
 Finally we obtain
almost sure bounds for partial sums of dilates of periodic functions of
bounded variations improving \cite{ABS}. This implies almost sure bounds for the discrepancy of $\{n_k x\}$ with $n_k$ an arbitrary
growing sequences of integers.

\end{abstract}

\subjclass[2010]{11C20, 42A20, 42A61, 42B05}

\maketitle
\section{Introduction}\label{sec:intro}

Let $1 \leq n_1 < n_2 < \ldots < n_k$ be an arbitrary sequence of integers.
The problem of bounding GCD sums
\begin{equation} \label{GCDSum}
\frac{1}{k} \sum_{i,j \leq k} \frac{(n_i, n_j)^{2\alpha}}{(n_i n_j)^{\alpha}} \ \ ,  \ \
0 < \alpha \leq 1
\end{equation}
arises naturally in metric Diophantine approximation, following
Koksma's initial work \cite{Koksma} (see also \cite{DyerHarman}).

While estimating (\ref{GCDSum})
for many specific sequences is straightforward,
the problem of determining the maximal size of (\ref{GCDSum}) among all sequences with $k$ terms is much more subtle and
the case $\alpha = 1$ was posed as a prize
problem by the Scientific
Society at Amsterdam in 1947 on Erd\"os's suggestion.

The problem for $\alpha = 1$ was solved by I. S. G\'al in 1949 \cite{Gal}.
G\'al's proof
is a difficult combinatorial argument spanning 20 pages. G\'al
showed that for $\alpha = 1$
the GCD sum (\ref{GCDSum}) is bounded by $C (\log\log k)^2$ with $C > 0$ an absolute constant and moreover that this bound is optimal up to the value of the constant $C > 0$.

Our first contribution is a simple, two page proof, of G\'al's theorem. In addition our proof
determines
the optimal constant $C$ as $k \rightarrow \infty$, which is new.

\begin{theorem}\label{thm:asy} As $k \rightarrow \infty$,
$$
\sup_{1 < n_1 < \ldots < n_k} \frac{1}{k (\log\log k)^2} \sum_{i,j \leq k}
\frac{(n_i, n_j)^2}{n_i n_j} \longrightarrow \frac{6 e^{2\gamma}}{\pi^2}.
$$

\end{theorem}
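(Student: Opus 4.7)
The plan is to match an explicit extremal construction (giving the lower bound) with a spectral estimate on the GCD matrix (giving the upper bound).

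For the lower bound, I take $\{n_1, \ldots, n_k\}$ to be the set of divisors of $\prod_{p \leq y} p^A$, with $y$ and $A$ tending to infinity and $\log A = o(\log y)$ (e.g.\ $A = y$). Since $(m,n)^2/(mn) = \prod_p p^{-|v_p(m) - v_p(n)|}$ is completely multiplicative, the normalised GCD sum factors as
$$\frac{1}{k}\sum_{i,j} \frac{(n_i, n_j)^2}{n_i n_j} = \prod_{p \leq y} \Bigl(\frac{1}{A+1} \sum_{a, b = 0}^{A} p^{-|a-b|}\Bigr),$$
and as $A \to \infty$ each local factor tends to $1 + 2\sum_{j \geq 1} p^{-j} = (1 + 1/p)/(1 - 1/p)$. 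Writing this as $(1 - 1/p^2)/(1 - 1/p)^2$ and invoking Mertens' theorem together with $\prod_p(1 - 1/p^2) = 6/\pi^2$ gives $\prod_{p \leq y} (1 + 1/p)/(1 - 1/p) \sim (6 e^{2\gamma}/\pi^2)(\log y)^2$. With $A = y$ we have $k = (y+1)^{\pi(y)}$ and hence $\log\log k \sim \log y$, matching the claimed asymptotic; values of $k$ not of this form are handled by padding with very large primes, which contribute negligibly.

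For the upper bound, let $G$ be the PSD matrix with entries $G_{ij} = (n_i, n_j)^2/(n_i n_j)$, so $\sum_{i,j} G_{ij} \leq k \|G\|_{\mathrm{op}}$. Fix a threshold $y$ and split $n_i = m_i \ell_i$ into its $y$-smooth and $y$-rough parts; this expresses $G$ as the Hadamard product $G^{\mathrm{sm}} \circ G^{\mathrm{rh}}$ of two PSD matrices, the second of which has unit diagonal. Schur's product theorem then yields $\|G\|_{\mathrm{op}} \leq \|G^{\mathrm{sm}}\|_{\mathrm{op}}$. The matrix $G^{\mathrm{sm}}$ is in turn bounded by the global GCD operator on $y$-smooth integers, which multiplicativity identifies with the tensor product $\bigotimes_{p \leq y} M_p$ where $M_p$ is the Toeplitz operator $(p^{-|a-b|})_{a, b \geq 0}$ on $\ell^2(\mathbb{Z}_{\geq 0})$. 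The Fourier symbol $(1 - p^{-2})/|1 - p^{-1} e^{i\theta}|^2$ of $M_p$ attains its maximum $(1 + 1/p)/(1 - 1/p)$ at $\theta = 0$, so
$$\|G^{\mathrm{sm}}\|_{\mathrm{op}} \leq \prod_{p \leq y}\frac{1 + 1/p}{1 - 1/p} \sim \frac{6 e^{2\gamma}}{\pi^2}(\log y)^2.$$
Choosing $y$ so that $\log y \sim \log\log k$ closes the bound.

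The main obstacle is the reduction from the $k \times k$ matrix $G^{\mathrm{sm}}$ to the infinite GCD operator on \emph{distinct} smooth integers: if several $n_i$ share the same smooth part, then $G^{\mathrm{sm}}$ contains large all-ones blocks which blow up its spectral norm. This must be handled either by choosing $y$ large enough that such collisions are essentially absent (while keeping $(\log y)^2$ of order $(\log\log k)^2$), or by refining Schur's inequality to exploit that distinct $n_i$ with a common smooth part necessarily have distinct rough parts, so the corresponding block of $G^{\mathrm{rh}}$ is close to the identity rather than just of diagonal one. Tracking all constants carefully to verify that the resulting corrections are $o((\log\log k)^2)$, so that the leading constant $6 e^{2\gamma}/\pi^2$ is genuinely preserved, is the delicate technical step.
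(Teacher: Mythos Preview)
Your lower bound construction is essentially the same as the paper's: both take the divisors of $\prod_{p\le y}p^{A}$, factor the GCD sum over primes, and extract the constant $6e^{2\gamma}/\pi^{2}$ via Mertens.

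For the upper bound your route is genuinely different, and the obstacle you flag at the end is not a technicality but a real gap. The Schur step $\|G\|_{\mathrm{op}}\le\|G^{\mathrm{sm}}\|_{\mathrm{op}}$ is already too lossy: if the $n_i$ are $k$ distinct primes larger than $y$, then $G^{\mathrm{sm}}$ is the all-ones matrix with norm $k$ while $G$ itself is essentially the identity. Neither proposed repair works as stated. Enlarging $y$ cannot force the smooth parts to be distinct, since the $n_i$ may be arbitrarily large primes regardless of $y$; and a ``refined Schur'' bound would have to control the full interaction between $G^{\mathrm{sm}}$ and $G^{\mathrm{rh}}$, not just the within-block structure of $G^{\mathrm{rh}}$ (off-block entries of $G^{\mathrm{rh}}$ can equal $1$, e.g.\ when two $n_i$ with different smooth parts share the same rough part). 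Getting the sharp constant $6e^{2\gamma}/\pi^{2}$ out of this operator-theoretic picture was exactly the open problem the paper settles.

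The paper bypasses the smooth/rough splitting for the upper bound entirely. It introduces the random Euler product $\zeta(1,X)=\prod_p(1-X(p)/p)^{-1}$ and proves the \emph{identity}
\[
\zeta(2)\sum_{i,j}\frac{(n_i,n_j)^{2}}{n_in_j}\,c(n_i)\overline{c(n_j)}
=\mathbb{E}\Bigl[\,|\zeta(1,X)|^{2}\,\Bigl|\sum_i c(n_i)X(n_i)\Bigr|^{2}\Bigr].
\]
A pointwise threshold inequality together with the moment bound $\log\mathbb{E}|\zeta(1,X)|^{2\ell}\le 2\ell(\log\log\ell+\gamma+o(1))$ then yields the spectral bound $(e^{2\gamma}/\zeta(2)+o(1))(\log\log k)^{2}$ directly; the factor $6/\pi^{2}$ comes from dividing through by $\zeta(2)$, and $e^{2\gamma}$ from Mertens' constant inside the moment estimate.
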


The extremal sequence in Theorem \ref{thm:asy} is supported on very smooth integers.
The generalization of (\ref{GCDSum}) to $\tfrac 12 \leq \alpha < 1$ was studied by Dyer and Harman \cite{DyerHarman} who were also interested in applications to metric diophantine
approximations.
Recently, Aistleitner, Berkes and Seip \cite{ABS} showed that for
$1/2 < \alpha < 1$, the GCD sum (\ref{GCDSum}) is bounded by
\begin{equation} \label{Sum1}
\ll \exp \Big ( C(\alpha) \cdot \frac{(\log k)^{1 - \alpha}}{(\log\log k)^{\alpha}} \Big ).
\end{equation}
This bound is sharp up to the value of the constant $C(\alpha)$.
Several authors have remarked on the similarity between this estimate and the conjectured maximal size of the zeta function along a vertical line (see \cite{Youness, Montgomery})
which states, for $1/2 < \alpha < 1$,
\begin{equation} \label{Sum2}
\sup_{|t| \leq k} \log |\zeta(\alpha + it)| \asymp
\frac{(\log k)^{1 - \alpha}}{(\log \log k)^{\alpha}}.
\end{equation}

Our method generalizes to the case $\tfrac 12 < \alpha < 1$
and allows us to rederive in a simple way the results of
Aistleitner, Berkes and Seip \cite{ABS}.
In addition, the proof we give shows that the
GCD sum (\ref{GCDSum}) is essentially majorized by the square
of the supremum
of a certain random model of the zeta function, which is in fact
used to conjecture (\ref{Sum2}) (see \cite{Youness, Montgomery} and Section 9 at the end of this paper). In particular the similarity between
(\ref{Sum1}) and (\ref{Sum2}) is not a coincidence.

Our method also generalizes to the \textit{spectral norm} case, which is a key ingredient in the applications described later on. In particular, if we let $\mathbf{c} = (c(1), \ldots, c(n))$, then one is interested in bounding the quantity
$$
\sup_{\| \mathbf{c} \|_2^2 = 1} \frac{1}{k} \sum_{i,j \leq k}
\frac{(n_i, n_j)^{2\alpha}}{(n_i n_j)^{\alpha}} \cdot c(n_i)
\overline{c(n_j)}.
$$
Sharp bounds for
the spectral norm have been established for $1/2 < \alpha < 1$ in
\cite[Theorem 5]{ABS}  but
the case $\alpha = 1$ had remained open (see \cite{ABS}, \cite{ABSW}). Our main theorem settles this problem.



\begin{theorem}\label{thm:spec}
Let $\mathbf{c} = (c(1), \ldots, c(n))$. Then, for $\alpha = 1$,
\begin{equation} \label{Spectral}
\sup_{\| \mathbf{c} \|_2^2 = 1} \frac{1}{k} \sum_{i,j \leq k}
\frac{(n_i, n_j)^{2\alpha}}{(n_i n_j)^{\alpha}} \cdot c(n_i) \overline{c(n_j)}
\leq \Big ( \frac{6 e^{2\gamma}}{\pi^2} + o(1) \Big )
\cdot (\log\log k)^2.
\end{equation}
In addition for $0 < \alpha < 1$ we re-derive the bounds obtained
for the spectral norm by Aistleitner, Berkes and Seip in \cite[Theorem 5]{ABS}. Precisely, for $\tfrac 12 < \alpha < 1$ we bound the spectral norm by
$$
\exp \Big ( 2C(\alpha) \frac{(\log k)^{1 - \alpha}}{(\log\log k)^{\alpha}} \Big )
$$
and for $0 < \alpha \leq \tfrac 12$ by
$$
k^{1 - 2\alpha} \cdot \exp \Big ( 2C(\alpha) \sqrt{\log k \log\log k}
\Big )
$$
with $C(\alpha)$ an absolute constants depending only on $\alpha$.
\end{theorem}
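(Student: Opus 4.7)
The plan is to represent the GCD quadratic form harmonically via Bohr's correspondence, reduce to a smooth-integer subproblem using a Schur product inequality, and finally apply an $L^\infty$ estimate for the random Euler product that models $\zeta(\alpha+it)$. The starting identity is
$$
\frac{(m,n)^{2\alpha}}{(mn)^\alpha} = \prod_p p^{-\alpha|v_p(m)-v_p(n)|},
$$
combined with the Poisson-kernel Fourier expansion $p^{-\alpha|k|} = \int_0^1 e(k\theta)K_{p,\alpha}(\theta)\,d\theta$, where $K_{p,\alpha}(\theta) = (1-p^{-2\alpha})|1-e(\theta)/p^\alpha|^{-2}$. Setting $F(\theta) = \sum_i c_i \chi_{n_i}(\theta)$ with $\chi_m(\theta) = \prod_p e(v_p(m)\theta_p)$, this yields
$$
\sum_{i,j\le k}\frac{(n_i,n_j)^{2\alpha}}{(n_in_j)^\alpha}c_i\bar c_j = \int_{\mathbb T^\infty}|F(\theta)|^2 K(\theta)\,d\theta,
$$
with product density $K(\theta) = \prod_p K_{p,\alpha}(\theta_p) = \zeta(2\alpha)^{-1}|Z_\alpha(\theta)|^2$ and $Z_\alpha(\theta) = \prod_p(1-e(\theta_p)/p^\alpha)^{-1}$ the random Euler-product model of $\zeta(\alpha+it)$. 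This places the quadratic form inside exactly the framework used to predict (\ref{Sum2}).

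Next, I would reduce to the smooth case. The GCD matrix factors as a Hadamard product $G = G^{\le y}\odot G^{>y}$ of PSD matrices with unit diagonals, so Schur's product inequality gives $\|G\|_{\mathrm{op}}\le\|G^{\le y}\|_{\mathrm{op}}$. Since $G^{\le y}$ depends on the $n_i$'s only through their $y$-smooth parts $m_i = \prod_{p\le y}p^{v_p(n_i)}$, we may assume the $n_i$ are $y$-smooth. In this case $F$ depends only on the coordinates $(\theta_p)_{p\le y}$, so integrating out the tail yields
$$
c^*Gc = \prod_{p\le y}(1-p^{-2\alpha})\int|F|^2|Z_\alpha^{\le y}|^2\,d\theta_{\le y} \le \prod_{p\le y}(1-p^{-2\alpha})\cdot\|Z_\alpha^{\le y}\|_\infty^2\cdot\|c\|_2^2.
$$
For $\alpha = 1$, Mertens gives $\|Z_1^{\le y}\|_\infty = \prod_{p\le y}(1-1/p)^{-1}\sim e^\gamma\log y$ and $\prod_{p\le y}(1-1/p^2)\to 1/\zeta(2) = 6/\pi^2$; choosing $y\sim\log k$ delivers the sharp constant $6e^{2\gamma}/\pi^2$ together with the $(\log\log k)^2$ factor. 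For $1/2<\alpha<1$ the analogous Mertens-type estimate $\|Z_\alpha^{\le y}\|_\infty\le\exp(C(\alpha)y^{1-\alpha}/\log y)$ and the optimal choice $y\sim \log k/\log\log k$ reproduces the ABS bound $\exp(2C(\alpha)(\log k)^{1-\alpha}/(\log\log k)^\alpha)$.

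For $\alpha\le 1/2$ the product $\prod_p(1-p^{-2\alpha})$ diverges and the $L^2$ bound $\|F\|_{L^2(\mathrm{Haar})}^2\le\|c\|_2^2$ must be supplemented by the crude dimensional bound $\|F\|_{L^2}^2\le k^{1-2\alpha}\|c\|_2^2$ (using that $F$ lies in the span of at most $k$ characters), which produces the stated $k^{1-2\alpha}$ factor. The main obstacle is ensuring that the $L^\infty$ step $\int|F|^2|Z_\alpha^{\le y}|^2\le\|Z_\alpha^{\le y}\|_\infty^2\|F\|_{L^2}^2$ is asymptotically sharp: the extremal $F$ must concentrate at the maximizer $\theta=0$ of $|Z_\alpha^{\le y}|$, which happens precisely for the smooth-integer extremal configuration of Theorem \ref{thm:asy}. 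The matching lower bound from Theorem \ref{thm:asy} then pins down the sharp constant $6e^{2\gamma}/\pi^2$ and certifies that no further improvement is possible in the Schur reduction or in the Cauchy--Schwarz step against $|Z_\alpha^{\le y}|$.
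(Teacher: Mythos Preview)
Your integral representation is exactly the paper's starting point (their $\zeta(\alpha,X)$ is your $Z_\alpha$, their $D(X)$ is your $F$), so the first display is fine. The gap is in the Schur reduction step. The inequality $\|G\|_{\mathrm{op}}\le\|G^{\le y}\|_{\mathrm{op}}$ is correct, but your subsequent bound
\[
\|G^{\le y}\|_{\mathrm{op}}\le \prod_{p\le y}(1-p^{-2\alpha})\cdot\|Z_\alpha^{\le y}\|_\infty^2
\]
is false. Once you pass to the $y$-smooth parts $m_i$ of the $n_i$, these need no longer be distinct, so the characters $\chi_{m_i}$ are not orthonormal and $\|F\|_{L^2}^2\ne\|c\|_2^2$. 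Concretely, take $n_i=q_i$ to be $k$ distinct primes all larger than $y$; then every $m_i=1$, $G^{\le y}$ is the all-ones matrix, and $\|G^{\le y}\|_{\mathrm{op}}=k$, whereas your right-hand side is $O((\log\log k)^2)$ for $\alpha=1$, $y\sim\log k$. So the Schur step throws away exactly the information (distinctness of the $n_i$) that makes the theorem true, and there is no choice of $y$ that repairs this. Your remark for $\alpha\le 1/2$ that ``$\|F\|_{L^2}^2\le k^{1-2\alpha}\|c\|_2^2$'' is also not meaningful as written, since $\|F\|_{L^2}$ does not depend on $\alpha$.

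The paper avoids the truncation altogether. It works with the full (almost surely finite but essentially unbounded) $\zeta(\alpha,X)$ and replaces your $L^\infty$ step by the pointwise splitting
\[
|\zeta(\alpha,X)D(X)|^2\le e^{2V}|D(X)|^2+k\,|\zeta(\alpha,X)|^{2(\ell+1)}e^{-2\ell V},
\]
where the factor $k$ enters via the trivial bound $|D(X)|^2\le k\|c\|_2^2$ on the event $|\zeta(\alpha,X)|>e^V$. Taking expectations and invoking the moment bound $\log\mathbb{E}|\zeta(\alpha,X)|^{2\ell}\le 2\ell(\log\log\ell+\gamma+o(1))$ (for $\alpha=1$) with $\ell\asymp\log k$ and $V=\log\log\log k+\gamma+o(1)$ makes the second term negligible and yields the sharp constant. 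The key point you are missing is that the dependence on $k$ has to come in through either $|D|\le\sqrt{k}$ or a tail/moment estimate for the full random Euler product, not through an $L^\infty$ bound on a truncated product.
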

The bound (\ref{Spectral}) answers a question raised by
Aistleitner, Berkes, Seip and Weber \cite{ABSW}
regarding the correct power of $\log\log k$ for
the spectral norm at $\alpha = 1$,
where it is described as a
 ``\textit{a profound problem}'' (see remarks after (33) in \cite{ABSW}).
We note that inequality (\ref{Spectral}) also
has an asymptotically sharp constant. For example, Hilberdink \cite{Hilberdink}
showed
that when $n_i = i$ the bound (\ref{Spectral}) is attained
as $k \rightarrow \infty$
for a certain choice of the coefficients $c(k)$
(this however is not true for $1/2 < \alpha < 1$,
see \cite{ABSW}). The bound is also attained when $n_k$ is
choosen to be the extremal sequence in Theorem \ref{thm:asy} and $c(k) = k^{-1/2}$.

The main idea in the proof of Theorem \ref{thm:spec} is that one can write (\ref{GCDSum}) as an integral involving the Riemann zeta-function (or, more precisely, a random model for the Riemann zeta-function) and then appeal to known distributional estimates for this quantity.

We note that while our method recovers completely Theorem \ref{thm:asy}
from \cite{ABS} and goes beyond when $\alpha = 1$,
the bound for $\alpha = 1/2$ is nonetheless not optimal.
It has been conjectured in \cite{ABSW}, correcting an older conjecture of Harman \cite{HarmanBook}, that the optimal bound
in the limiting case $\alpha = 1/2$ is
$$
\exp \Bigg ( C \cdot \sqrt{\frac{\log k}{\log\log k}} \Bigg ).
$$
The best results towards this
conjecture are due to Bondarenko and
Seip (see \cite{BondarenkoSeip}), who come within a triple logarithm
of the conjecture.

We now focus on applications of Theorem \ref{thm:spec}, in the spirit of \cite{ABS}.
Let $f$ be a function such that
\begin{equation} \label{conditions}
f(x + 1) = f(x) \text{ , } \int_{0}^{1} f(x) dx = 0.
\end{equation}
We are interested in $L^2$ conditions for the almost everywhere convergence
of
\begin{equation} \label{dilated}
\sum_{\ell = 1}^{\infty} c_{\ell} f(n_{\ell} x)
\end{equation}
for
$1 \leq n_1 < n_2 < \ldots$ an arbitrary sequence of integers, and in almost
sure bounds for
\begin{equation} \label{partsumbounds}
\sum_{i \leq k} f(n_i x).
\end{equation}
From the point of view of applications to metrical diophantine approximation
a natural choice of $f$ is $f(x) = \{x \} - \tfrac 12$, a function of
bounded variation (which implies that the $j$-th Fourier coefficient
of $f$ is $O(1/j)$). Choosing $f(x) = \chi_{I}(\{x\}) - |I|$ in (\ref{partsumbounds}), with $\chi_I$ the
indicator function of an interval $I \subset [0,1]$ relates the problem of
bounding (\ref{partsumbounds}) to that of obtaining almost sure bounds
for the discrepancy of $\{n_k x\}$.

For smooth functions such as $f(x) = \sin 2\pi x$, one can use a deep
result of Carleson \cite{Carleson} to show that if $c_{k}$ is in $\ell^2$
then (\ref{dilated}) converges almost everywhere. However already for  $f(x) = \{x\} - \tfrac 12$ the condition $c_k \in \ell^2$ is insufficient. In \cite[Section 6]{ABS} it is shown that for $f(x) = \{x\} - \tfrac 12$ and any $\gamma < 2$
there exists an increasing sequence of positive integers $n_i$ and a sequence of real numbers $c_{\ell}$, with
\begin{equation} \label{condition}
\sum_{\ell = 1}^{\infty} c_{\ell}^2 \cdot (\log\log \ell)^{\gamma} < \infty
\end{equation}
for which (\ref{dilated}) diverges almost everywhere.

Aistleitner, Berkes and Seip \cite[Theorem 3]{ABS} complemented this negative result by showing
that if (\ref{condition}) holds for $\gamma > 4$ then (\ref{dilated})
converges almost everywhere for any $f$ of bounded variation (and thus also
$f(x) = \{ x \} - \tfrac 12$). In the theorem below we close the
remaining gap.
\begin{corollary}\label{cor:AE}
Let $f$ be a function of bounded variation satisfying (\ref{conditions}) .
Let $c_k$ be a sequence of real numbers such that
$$
\sum_{\ell \geq 3} c_{\ell}^2 \cdot (\log\log \ell)^{\gamma} < \infty
$$
for some $\gamma > 2$. Then for every increasing sequence $(n_{\ell})_{\ell \geq 1}$ the series
$$
\sum_{\ell \geq 1} c_{\ell} f(n_{\ell} x)
$$
converges almost everywhere.
\end{corollary}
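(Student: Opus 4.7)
The plan is to combine the sharp spectral norm bound from Theorem \ref{thm:spec} with the Carleson-Hunt-type maximal inequality for bounded-variation dilates developed elsewhere in this paper, and then to deduce almost everywhere convergence through a tail decomposition along a rapidly growing sequence of scales.

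First I would reduce the problem to a GCD estimate. Expanding $f(x) = \sum_{j \neq 0} \hat{f}(j) e^{2\pi i j x}$ and using $|\hat{f}(j)| \ll V(f)/|j|$, the orthogonality relations give
\[ \Big| \int_0^1 f(n_\ell x) \overline{f(n_{\ell'} x)} \, dx \Big| \ll V(f)^2 \, \frac{(n_\ell, n_{\ell'})^2}{n_\ell n_{\ell'}}. \]
Writing $S_M(x) = \sum_{\ell \leq M} c_\ell f(n_\ell x)$ and invoking Theorem \ref{thm:spec} at $\alpha=1$ yields the $L^2$ bound $\| S_L \|_2^2 \ll V(f)^2 (\log\log L)^2 \sum_{\ell \leq L} c_\ell^2$. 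The critical next step is to upgrade this to the maximal inequality $\| \sup_{M \leq L} |S_M| \|_2^2 \ll (\log\log L)^2 \sum_{\ell \leq L} c_\ell^2$. This is precisely the content of the Carleson-Hunt-type theorem for bounded-variation dilates; its essential feature is the absence of the additional $(\log L)^2$ loss that the classical Rademacher-Menshov inequality would introduce.

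Granted this maximal inequality, I would conclude by a tail decomposition. Fix any $c > 1$, set $K^{(0)} = K$ and $K^{(j+1)} = \exp((\log K^{(j)})^c)$, so that $\log\log K^{(j)} = c^j \log\log K$. Applying the maximal inequality to the window $[K^{(j)}, K^{(j+1)}]$ and bounding the coefficient tail by $\sum_{\ell > K^{(j)}} c_\ell^2 \leq (\log\log K^{(j)})^{-\gamma} W_K$, with $W_K := \sum_{\ell > K} c_\ell^2 (\log\log \ell)^\gamma$, gives
\[ \Big\| \sup_{K^{(j)} \leq M \leq K^{(j+1)}} |S_M - S_{K^{(j)}}| \Big\|_2 \ll c^{\,1 + j(1-\gamma/2)} (\log\log K)^{1-\gamma/2} W_K^{1/2}. \]
Since $\gamma > 2$ the exponent $1 - \gamma/2$ is negative, so summing over $j \geq 0$ is a convergent geometric series, and the triangle inequality gives $\| \sup_{M \geq K} |S_M - S_K| \|_2^2 \ll (\log\log K)^{2-\gamma} W_K$. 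Choosing a rapidly growing subsequence $K_m = 2^{2^{2^m}}$, so that $(\log\log K_m)^{2-\gamma} = 2^{(2-\gamma)m}$ is summable in $m$, Chebyshev's inequality and the Borel-Cantelli lemma yield $\sup_{M \geq K_m} |S_M(x) - S_{K_m}(x)| \to 0$ for almost every $x$. Hence $\{S_M(x)\}_M$ is Cauchy a.e. and the series converges almost everywhere.

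The main obstacle is the maximal inequality of the second step. Without the Carleson-Hunt-type improvement, the classical Rademacher-Menshov inequality introduces an additional $(\log L)^2$ factor which, once propagated through the tail decomposition, forces the more restrictive condition $\gamma > 4$ obtained in \cite{ABS}. The sharp $(\log\log)^2$ spectral norm from Theorem \ref{thm:spec}, together with the Carleson-Hunt-type maximal inequality for bounded-variation dilates, is exactly what brings the threshold down to the expected $\gamma > 2$.
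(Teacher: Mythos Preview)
Your proposal is correct and follows essentially the same route as the paper. The paper's own proof consists of a single sentence invoking Theorem~\ref{thm:BVmax} (the Carleson--Hunt-type maximal inequality for bounded-variation dilates) and then referring to \cite[Proof of Theorems 2 and 3]{ABS} for the ``standard deduction''; you have simply supplied an explicit version of that deduction via the doubly-exponential tail decomposition and Borel--Cantelli. Your first paragraph, deriving the $L^2$ bound from Theorem~\ref{thm:spec}, is not actually needed once you invoke Theorem~\ref{thm:BVmax}, but this is harmless.
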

This result is optimal in the sense that the exponent $\gamma$ cannot be lowered any further. We note that our coundition
is roughly equivalent to $c_{\ell} \in L^2 \cdot (\log\log 1/L)^{\gamma}$, $\gamma > 2$, since the series composed of integers with $|c_{\ell}| < 1/\ell^2$ converges absolutely.
Our method of proof also allows us to recover the recent
results in \cite{ABSW} obtained for functions $f$ with Fourier coefficients
decaying at a rate of $j^{-\alpha}$ with $1/2 < \alpha < 1$. Corollary \ref{cor:AE} also improves a very recent result of Weber \cite{Weber} where the same conclusion is obtained with a $(\log\log k)^{4} / (\log\log\log k)^2$ in place of $(\log\log k)^{2 + \varepsilon}$ for the special case $n_k = k$.

We also obtain the following improvement of a result in \cite[Theorem 2]{ABS}.
\begin{corollary}\label{cor:EK}
Let $f$ be a function of bounded variation satisfying (\ref{conditions}).
Then, for almost every $x$,
$$
\sum_{\ell \leq N} f(n_{\ell} x) \ll \sqrt{N \log N} (\log\log N)^{3/2 + \varepsilon}.
$$
\end{corollary}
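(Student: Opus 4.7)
The plan is to deduce Corollary \ref{cor:EK} from Corollary \ref{cor:AE} via a summation-by-parts argument with a carefully tuned weight sequence.

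Fix $\varepsilon > 0$ and set
$$
c_\ell = \frac{1}{\sqrt{\ell \log \ell}\,(\log\log \ell)^{3/2 + \varepsilon}}, \qquad \ell \geq 3,
$$
with $c_1 = c_2 = 1$. The weights are chosen to balance two competing constraints: (i) $c_N^{-1} = \sqrt{N \log N}\,(\log\log N)^{3/2 + \varepsilon}$ is precisely the bound we wish to establish for $|S_N|$; (ii) $\sum_\ell c_\ell^{2}\,(\log\log \ell)^\gamma < \infty$ should hold for some $\gamma > 2$, so that Corollary \ref{cor:AE} applies. For (ii), pick any $\gamma \in (2, 2+2\varepsilon)$: the series reduces to $\sum_\ell 1/(\ell \log \ell\,(\log\log \ell)^{3+2\varepsilon-\gamma})$, convergent by the Bertrand criterion since $3+2\varepsilon-\gamma > 1$.

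By Corollary \ref{cor:AE}, the series $\sum_\ell c_\ell f(n_\ell x)$ converges for almost every $x$, so the partial sums $B_N(x) = \sum_{\ell \leq N} c_\ell f(n_\ell x)$ are uniformly bounded in $N$ by some finite random quantity $C(x)$. Next I would apply Abel summation to $S_N(x) = \sum_{\ell \leq N} c_\ell^{-1}\cdot[c_\ell f(n_\ell x)]$, obtaining
$$
S_N(x) = \frac{B_N(x)}{c_N} - \sum_{\ell=1}^{N-1} B_\ell(x)\left(\frac{1}{c_{\ell+1}} - \frac{1}{c_\ell}\right).
$$
Since $c_\ell^{-1}$ is eventually monotone increasing, the increments $c_{\ell+1}^{-1} - c_\ell^{-1}$ are eventually non-negative and telescope to $c_N^{-1} - c_1^{-1}$, with the finite initial segment contributing only $O(1)$. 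Bounding $|B_\ell(x)| \leq C(x)$ then yields $|S_N(x)| \leq 2\,C(x)\,c_N^{-1} + O(1) \ll \sqrt{N\log N}\,(\log\log N)^{3/2 + \varepsilon}$, as claimed.

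There is no substantive technical obstacle here: the deduction is essentially an optimization over weights. The sharp threshold $\gamma > 2$ in Corollary \ref{cor:AE} is exactly what produces the exponent $3/2$ on $\log\log N$ in Corollary \ref{cor:EK}; any strengthening of the former would propagate to a corresponding improvement in the latter.
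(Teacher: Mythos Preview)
Your proof is correct, but it takes a different route from the paper. The paper derives both Corollary~\ref{cor:AE} and Corollary~\ref{cor:EK} directly from the maximal inequality of Theorem~\ref{thm:BVmax}, referring to the standard deduction in \cite{ABS}: one applies Theorem~\ref{thm:BVmax} with $c(k)\equiv 1$ over dyadic blocks $N=2^j$, uses Chebyshev's inequality, and invokes Borel--Cantelli with the threshold $\lambda_j=\sqrt{2^j j}\,(\log j)^{3/2+\varepsilon}$. Your argument instead treats Corollary~\ref{cor:AE} as a black box and extracts Corollary~\ref{cor:EK} from it by Abel summation with the weights $c_\ell = (\ell\log\ell)^{-1/2}(\log\log\ell)^{-3/2-\varepsilon}$.

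Both approaches ultimately rest on Theorem~\ref{thm:BVmax}, so neither is more elementary in a real sense. Your route has the virtue of making the dependence explicit: the sharp threshold $\gamma>2$ in Corollary~\ref{cor:AE} is visibly what forces the exponent $3/2$ on $\log\log N$, exactly as you note. The paper's direct route via Borel--Cantelli avoids passing through Corollary~\ref{cor:AE} and is the more common template in the literature for turning an $L^2$ maximal inequality into an almost-sure growth bound.
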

This improves the exponent $\tfrac 52$ obtained in \cite[Theorem 2]{ABS} to $\tfrac 32$. The optimal exponent is conjectured to be $\tfrac 12$.
The problem of obtaining almost everywhere bounds on the quantity $\sum_{\ell \leq N} f(n_{\ell} x)$ has a long history, partly motivated by the problem of
obtaining almost sure bounds for the discrepancy of the sequence $\{n_k x\}$.
Indeed weaker estimates on this quantity (or special cases thereof) were obtained by G\'al \cite{Gal} (1949), Erd\"os and Koksma \cite{ErdosKoksma} (1949), G\'al and Koksma \cite{GalKoksma} (1950), Cassels \cite{Cassels} (1950), R. C. Baker \cite{Baker} (1981), Aistleitner, Mayer and Ziegler (2010), and Aistleitner, Berkes and Seip \cite{ABS} (2012). See also \cite{ABSW}, \cite{Berkes}, and \cite{Weber}.


The key estimate in the proofs of Corollary \ref{cor:AE} and Corollary \ref{cor:EK} is an optimal Carleson-Hunt-type inequality for systems of dilated functions $\{ f(n_{\ell} x) \}$ with $f$ of bounded variation. The theorem below answers a question
in \cite{ABS} regarding the optimal version of the Carleson-Hunt theorem,
in this setting (see remarks after Lemma 4 in \cite{ABS}).
It would be fitting to call this a maximal analogue of G\'al's theorem.

\begin{theorem}\label{thm:BVmax}Let $f: \mathbb{T} \rightarrow \mathbb{C}$ be a complex-valued function on the circle with Fourier coefficients, $a(j)$, satisfying the decay condition $|a(j)| = O(|j|^{-1})$. Let $n_1,n_2,\ldots,n_N$ be a strictly increasing sequence of positive integers and $c(k)$ a sequence of complex numbers. Then,
$$ \int_{\mathbb{T}} \left( \max_{1\leq M \leq N} \left| \sum_{k=1}^{M} c(k) f(n_k x) \right|  \right)^2 \ll   \left( \log\log  N  \right)^2 \sum_{k=1}^{N} |c(k)|^2.$$
\end{theorem}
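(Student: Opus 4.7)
My plan is to combine the spectral-norm bound of Theorem~\ref{thm:spec} with the Carleson--Hunt theorem for trigonometric partial sums. The target bound $(\log\log N)^2$ matches, up to constants, the $L^2$ estimate $\|S_N\|_2^2 \ll (\log\log N)^2 \sum |c(k)|^2$ that follows directly from Theorem~\ref{thm:spec}, so the maximal inequality must be tight with constant $O(1)$ relative to the $L^2$ bound and cannot afford the $(\log N)^2$ loss of the classical Rademacher--Menshov argument.

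First I Fourier-expand $f(x) = \sum_m a(m) e(2\pi i m x)$, obtaining $|a(m)| \ll 1/|m|$, and set $T_M^{(m)}(x) := \sum_{k=1}^M c(k) e(2\pi i m n_k x)$, so that $S_M(x) = \sum_m a(m) T_M^{(m)}(x)$. For each fixed $m \neq 0$ the integers $m n_1 < \cdots < m n_N$ are distinct, so the sequence $(T_M^{(m)})_M$ is, after the measure-preserving change of variable $y = m x$, the sequence of partial Fourier sums, in natural frequency order, of $g(y) := \sum_k c(k) e(2\pi i n_k y) \in L^2(\mathbb{T})$. Since $\|g\|_2^2 = \sum_k |c(k)|^2$ by orthogonality, the Carleson--Hunt theorem yields the uniform-in-$m$ maximal estimate
$$\bigl\|\sup_M |T_M^{(m)}|\bigr\|_{L^2(\mathbb{T})} \;\leq\; C \sqrt{\textstyle\sum_k |c(k)|^2}.$$

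Next I dyadically decompose in the index $k$: for each $0 \leq j \leq \log_2 N$ and $a \geq 1$, set $D_{j,a}(x) := \sum_{k \in ((a-1) 2^j,\, a 2^j]} c(k) f(n_k x)$. Any partial sum $S_M$ is a sum of at most $\log_2 N$ such block sums, and Theorem~\ref{thm:spec} gives $\|D_{j,a}\|_2^2 \ll (\log\log N)^2 \sum_{k \in I_{j,a}}|c(k)|^2$. The critical quantity is $\|\max_a |D_{j,a}|\|_2$ at each level $j$. Writing $D_{j,a} = \sum_m a(m) \bigl(T_{a 2^j}^{(m)} - T_{(a-1) 2^j}^{(m)}\bigr)$, the maximum over $a$ of each mode's telescoping difference is dominated by $2 \sup_M |T_M^{(m)}|$, which is controlled by Carleson--Hunt uniformly in $m$. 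The divergent sum $\sum_m |a(m)|$ is handled by splitting at a threshold $L = L(j)$: for low frequencies $|m| \leq L$ I use Carleson--Hunt mode-by-mode at cost $O(\log L)\sqrt{\sum|c(k)|^2}$, and for the high-frequency tail $|m| > L$ I apply Theorem~\ref{thm:spec} to the truncation $f^{>L}$, gaining an additional factor $1/L$ from the $\ell^2$ tail $\sum_{|m|>L}|a(m)|^2 \ll 1/L$; balancing $L$ produces a per-level bound of the correct shape.

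The hard step is the last one: naive Rademacher--Menshov summation over the $\log_2 N$ dyadic levels would reintroduce a factor of $(\log N)^2$ and fall short of the target by a factor $(\log N/\log\log N)^2$. The essential saving must come from controlling $\|\max_a |D_{j,a}|\|_2$ directly via Carleson--Hunt rather than through the crude inequality $\|\max_a\|_2 \leq \sqrt{\sum_a \|\cdot\|_2^2}$: Carleson--Hunt acts at the level of the full dyadic family of block sums at each level $j$, absorbing the loss that Rademacher--Menshov usually suffers. Once that per-level maximal estimate is installed, telescoping across the $\log_2 N$ levels costs only a constant factor, giving the final $(\log\log N)^2$ maximal inequality and improving on the $(\log\log N)^4$ bound of \cite{ABS}.
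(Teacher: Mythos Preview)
Your overall architecture---split the Fourier expansion of $f$ into a ``tame'' piece handled mode-by-mode via Carleson--Hunt and a ``remainder'' piece handled by Rademacher--Menshov plus an $L^2$ saving---is exactly the shape of the paper's argument. But two of your concrete choices do not work, and the last paragraph is a placeholder rather than an argument.

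First, the split must be by largest prime factor, not by size. The paper sets $J=(\log N)^{2A+2}$ and writes $f=p+r$ with $p$ carrying frequencies $j$ with $P^{+}(j)\leq J$ and $r$ the rest. For the smooth piece $p$, pulling $\sum_j |a(j)|$ outside via Minkowski and applying Carleson--Hunt to each mode gives a factor $\sum_{P^+(j)\leq J} 1/j = \prod_{p\leq J}(1-1/p)^{-1}\ll \log\log N$ by Mertens. Your size cutoff $|m|\leq L$ gives the same $\log L$ factor, so this part is fine.

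The gap is in the remainder. You assert that Theorem~\ref{thm:spec} applied to $f^{>L}$ picks up a factor $1/L$ from $\sum_{|m|>L}|a(m)|^2\ll 1/L$. It does not: the passage from $\int|\sum_k c(k) f(n_k x)|^2\,dx$ to the GCD quadratic form uses only the pointwise bound $|a(j)|\leq C/|j|$, so the spectral estimate sees $\sup_j j|a(j)|$ and is completely insensitive to the $\ell^2$ tail of $a$. Truncating to $|m|>L$ does not shrink that sup-norm and yields no saving. The paper instead proves a genuine almost-orthogonality lemma for the rough piece $r$ (Lemma~\ref{lem:orthoR}): for any subinterval $I\subset[1,N]$,
\[
\int_{\mathbb T}\Big|\sum_{k\in I}c(k)r(n_kx)\Big|^2\,dx \ \ll\ \frac{(\log\log N)^2}{(\log N)^{2A}}\sum_{k\in I}|c(k)|^2.
\]
The saving $(\log N)^{-2A}$ comes from the condition $P^+(j)>J$, not from a tail bound: one writes the integral as $\mathbb E\big[|\sum_{P^+(n)>J} X(n)/n|^2\,|D(X)|^2\big]$, uses a majorant principle to replace the restricted Dirichlet series by $\big(\sum_{p>J}X(p)/p\big)\zeta(1,X)$, and then controls this prime sum exactly as in the proof of Theorem~\ref{thm:spec}. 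With that saving in hand, the paper \emph{does} run Rademacher--Menshov on $r$, pays the $(\log N)^2$ loss, and absorbs it by taking $A>1$.

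Your final paragraph acknowledges that Rademacher--Menshov costs $(\log N)^2$ and proposes that Carleson--Hunt somehow ``acts at the level of the full dyadic family of block sums'' to absorb it. This is not an argument: Carleson--Hunt controls maxima over \emph{partial sums in frequency order}, and there is no mechanism by which it bounds $\max_a|D_{j,a}|$ for the function $f$ (as opposed to a single exponential mode) without reintroducing the divergent sum $\sum_m|a(m)|$. The paper does not avoid Rademacher--Menshov for the remainder---it earns the right to use it via the $P^+$-based saving of Lemma~\ref{lem:orthoR}.
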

This inequality with
an exponent of $4$ instead of $2$ was obtained in \cite[Lemma 4]{ABS}, it's
also shown there that the function $(\log\log N)^2$ cannot be
replaced by any slower growing function.

There are several innovations compared to the proof in \cite{ABS}.
First, we perform a splitting according to the largest prime divisor,
secondly, we use a majorant principle to handle the tails composed
of large primes, and finally
we use the ideas that enter in our proof of Theorem \ref{thm:spec} to handle the
contribution of the large primes after the application
of the majorant principle.

Because of the splitting, which is done according to the largest
prime factor, it is tempting to investigate if there are any links
with the P-summation method of Fouvry and Tenenbaum,
applied to trigonometric series as in
de la Bret\`eche and Tenenbaum's paper \cite{Tenenbaum}.

\section{Notation}
We use the usual asymptotic notation. For instance, we write $X\ll Y$ to indicate that there exists a universal constant $C$ such that $|X| \leq C |Y|$. We let $\mathbb{T}$ denote the unit circle, and $e(x) := e^{2 \pi i x}$ for $x \in \mathbb{T}$. For $f \in L^1(\mathbb{T})$ we define the $j$-th Fourier coefficient by the relation
$$c(j) := \int_{\mathbb{T}} f(x) e(-jx) dx.$$
We let $\zeta(s)$ denote the Riemann zeta function.

\section{The random model}

Let $X(p)$ be a sequence of independent random variable, one for each prime $p$, and equidistributed
on the torus $\mathbb{T}$. For an integer $n$ we let
$$
X(n) := \prod_{p^{\alpha} \| n} X(p)^{\alpha}.
$$
The random model of the zeta-function that we will be working with is
the following
$$
\zeta(\sigma, X) := \prod_{p} \Big (1 - \frac{X(p)}{p^{\sigma}} \Big )^{-1}.
$$
Note that the product is convergent almost surely for $\sigma > \tfrac 12$ by Kolmogorov's
three series theorem. Note also that in an $L^p$ sense (with $p > 0$) for
$\tfrac 12 < \sigma < 1$,
$$
\zeta(\sigma, X) = \sum_{n} \frac{X(n)}{n^{\sigma}}
$$
and that
$$
\mathbb{E}[X(n)\overline{X(m)}] = \begin{cases}
1 & \text{ if } n = m \\
0 & \text{ otherwise}
\end{cases}.
$$
Instead of working with the probabilistic model we could also work
with the zeta function itself,  since for example,
$$
\lim_{T \rightarrow \infty} \frac{1}{2T} \int_{-T}^{T}
|\zeta(\sigma + it)|^2 \cdot n^{-it} dt = \mathbb{E}\Big [|\zeta(\sigma, X)|^2 \cdot \overline{X(n)}\Big].
$$
To re-inforce this point, the distributional estimates that we use in Lemma 7 below
are known unconditionally for $\zeta(\sigma + it)$ (see \cite{Youness, GranvilleSound, Sound}). These results however
 are
often obtained by first passing to the random model, for this reason
we did not see the advantage of working with $\zeta(\sigma + it)$ directly
which
relies on deeper machinery (for example the zeros of $\zeta(s)$ enter the analysis).
The random model described above is commonly used in the study of the
Riemann zeta-function, we refer the reader to \cite{Youness, GranvilleSound, Sound, LLR, Erdos}) for more information and examples of its application.

\section{The distributional estimate}

The lemma below is adapted from \cite[Lemma 2.1]{Youness}.

\begin{lemma}
We have the following bound,
$$
\log \mathbb{E} |\zeta(\alpha, X)|^{2\ell}
\leq \begin{cases}
2\ell(\log\log \ell+ \gamma + O((\log \ell)^{-1}) & \text{ for } \alpha = 1 \\
C(\alpha) \ell^{1/\alpha} (\log \ell)^{-1} & \text{ for } \tfrac 12 < \alpha < 1 \\
C(\tfrac 12) \ell^2 \log (\alpha - \tfrac 12)^{-1/2} & \text{ for } \alpha \rightarrow 1/2 \\
\end{cases}.
$$
\end{lemma}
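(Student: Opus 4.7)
The plan is to use the independence of $\{X(p)\}_p$ together with the Euler product. Since $\mathbb{E}[X(n)\overline{X(m)}] = \mathbf{1}_{n=m}$ and the $X(p)$ are independent, the $2\ell$-th moment factors as
\[
\mathbb{E}|\zeta(\alpha,X)|^{2\ell} \;=\; \prod_p E_p(\alpha,\ell), \qquad E_p(\alpha,\ell) := \frac{1}{2\pi}\int_0^{2\pi}\bigl|1 - e^{i\theta}/p^{\alpha}\bigr|^{-2\ell}\,d\theta,
\]
and Parseval applied to the series $(1-z)^{-\ell}$ rewrites each factor as the binomial sum $E_p(\alpha,\ell) = \sum_{a\geq 0}\binom{\ell+a-1}{a}^{2}p^{-2\alpha a}$. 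The task reduces to estimating $\sum_p \log E_p(\alpha,\ell)$.

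Two natural bounds play off one another: the pointwise estimate $|1 - e^{i\theta}/p^\alpha| \geq 1 - p^{-\alpha}$ gives $\log E_p \leq -2\ell\log(1-p^{-\alpha})$, while the binomial expansion yields $\log E_p \leq \ell^{2}p^{-2\alpha} + O((\ell/p^\alpha)^4)$ whenever $\ell/p^\alpha$ is small. The first bound diverges upon summation over all primes when $\alpha \leq 1$, so I would split the primes at an appropriate threshold $p_0 = p_0(\alpha,\ell)$, applying the pointwise bound for $p \leq p_0$ and the Taylor bound for $p > p_0$.

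The three cases of the lemma then fall out by selecting $p_0$ and summing with standard prime-sum estimates. For $\alpha = 1$ take $p_0 = \ell$: Mertens' third theorem, $-\sum_{p\leq \ell}\log(1-1/p) = \gamma + \log\log\ell + O(1/\log\ell)$, supplies the main term $2\ell(\log\log\ell + \gamma)$ with error $O(\ell/\log\ell)$, while the large-prime tail $\ell^{2}\sum_{p>\ell}p^{-2} = O(\ell/\log\ell)$ is absorbed. For $\tfrac12 < \alpha < 1$ take $p_0 = \ell^{1/\alpha}$; by the prime number theorem both $\sum_{p\leq p_0}p^{-\alpha}$ and $\sum_{p>p_0}p^{-2\alpha}$ are of order $\ell^{1/\alpha - 1}/\log\ell$, yielding a total bound of size $C(\alpha)\ell^{1/\alpha}/\log\ell$. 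For $\alpha\to 1/2$ take $p_0 \asymp \ell^{2}$ and use $\sum_p p^{-2\alpha} = \log\bigl(1/(2\alpha-1)\bigr) + O(1)$ (valid as $\alpha \to 1/2^+$) to control the large-prime contribution by $\ell^{2}\log\bigl((\alpha-1/2)^{-1/2}\bigr)$.

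The main technical point will be verifying the Taylor bound on $\log E_p$ uniformly in $p$ near the transition threshold, since the tail of the binomial sum requires controlling ratios like $\binom{\ell+a-1}{a} \leq (\ell^a/a!)\exp(a(a-1)/\ell)$. Choosing $p_0$ with a small extra constant (for instance $(2\ell)^{1/\alpha}$ in place of $\ell^{1/\alpha}$) absorbs this issue comfortably and completes the proof.
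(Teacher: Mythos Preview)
Your proposal is correct and follows essentially the same route as the paper: factor the moment as a product over primes, split the primes at the threshold $p_0=(2\ell)^{1/\alpha}$, apply the trivial bound $E_p\leq (1-p^{-\alpha})^{-2\ell}$ below the threshold (combined with Mertens/PNT), and the second-order bound $\log E_p\leq \ell^2 p^{-2\alpha}+O(\cdot)$ above it. The only cosmetic difference is that the paper identifies the large-prime factor with a modified Bessel function, $E_p=I_0(2\ell/p^{\alpha})(1+O(\ell/p^{2\alpha}))$, and then uses $\log I_0(t)\ll t^2$, whereas you reach the same $\ell^2 p^{-2\alpha}$ bound via the Parseval/binomial expansion $E_p=\sum_{a\geq 0}\binom{\ell+a-1}{a}^2 p^{-2\alpha a}$; since $\binom{\ell+a-1}{a}\approx \ell^a/a!$ these are literally the same estimate in different notation.
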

\begin{proof}
Note that
$$
\mathbb{E}[|\zeta(\alpha, X)|^{2\ell} = \prod_{p} E_\ell(p)
\text{ with }
E_\ell(p) = \mathbb{E} \Big [ \Big | \Big ( 1 - \frac{X(p)}{p^{\alpha}}
\Big )^{-2\ell} \Big | \Big ].
$$
For $p < (2\ell)^{1/\alpha}$ we have the trivial bound $E_\ell(p) \leq (1 - 1/p^{\alpha})^{-2\ell}$. For $p > (2\ell)^{1/\alpha}$ we notice that
$$
E_\ell(p) = \frac{1}{2\pi} \int_{-\pi}^{\pi} \Big (1 - \frac{e^{i\theta}}{ p^{\alpha}} \Big )^{-\ell} \cdot
\Big (1 - \frac{e^{-i\theta}}{p^{\alpha}} \Big )^{-\ell} d\theta = I_0(2\ell/p^{\alpha})
(1 + O(\ell/p^{2\alpha}))
$$
where $I_0(z)$ is the 0-th modified Bessel function, and
$I_0(t) = \sum_{n} (t/2)^{2n} / (n!)^2$. In particular
$\log I_0(t) \ll t^2$ for $0 < t \leq 1$. Combining these bounds we get
$$
\log \mathbb{E}[|\zeta(\alpha, X)|^{2\ell}] \leq 2\ell \sum_{p < (2\ell)^{1/\alpha}}
\log \Big ( 1 - \frac{1}{p^{\alpha}} \Big ) +  C \sum_{p > (2\ell)^{1/\alpha}}
\frac{\ell^2}{p^{2\alpha}}.
$$
When $\alpha = 1$ the first sum contributes $2 \ell ( \log\log \ell +  \gamma + O(1/\log \ell))$ while the second
contributes $O(1)$. When $1/2 < \alpha < 1$ the prime number theorem shows that
the above sum is
$$
\ll \frac{\ell^{1/\alpha}}{\log \ell} \cdot \Big ( \frac{\alpha}{1-\alpha}
+ \frac{\alpha}{2\alpha - 1} \Big ).
$$
Finally when $\alpha$ tends to $1/2$ we use the more careful bound
$$
\sum_{p} \frac{\ell^2}{p^{2\alpha}} \ll \ell^2 \cdot \log (\alpha - \tfrac 12)^{-1}
$$
which comes from $\zeta(2\alpha) = 1/(2\alpha - 1) + O(1)$, to conclude.
\end{proof}

\section{Proof of Theorem \ref{thm:spec}}

Let $\mathcal{N}=\{n_1, \ldots, n_k\}$.
Let
$$
D(X) := \sum_{n \in \mathcal{N}} c(n) X(n).
$$
Consider the expression
$$
\mathbb{E} \Big [ |\zeta(\alpha, X)|^2 \cdot |D(X)|^2 \Big ].
$$
On the one hand, expanding the square this is equal to
\begin{align} \label{First} \nonumber
\sum_{n = 1}^{\infty} \Big | \sum_{\substack{k | n \\ k \in \mathcal{N}}}
c(k) \cdot \frac{k^{\alpha}}{n^{\alpha}} \Big |^2
& = \sum_{m,n \in \mathcal{N}} c(m) \overline{c(n)} \cdot \frac{(k m)^{\alpha}}
{[k,m]^{2\alpha}} \cdot \zeta(2\alpha) \\
& = \zeta(2\alpha) \sum_{i,j \leq k} \frac{(n_i, n_j)^{2\alpha}}{(n_i n_j)^{\alpha}}
\cdot c(n_i) \overline{c(n_j)}.
\end{align}
On the other hand, for any $\ell, V > 0$, and $\tfrac 12 < \alpha \leq 1$,
\begin{equation} \label{Secondeq}
|\zeta(\alpha,X)D(X)|^2 \leq e^{2V} \cdot |D(X)|^2
+ k \cdot |\zeta(\alpha,X)|^{2(\ell + 1)} \cdot e^{-2\ell V}.
\end{equation}
Indeed to prove this inequality note that if $|\zeta(\alpha, X)| < e^{V}$ then
the left-hand side is less than $e^{2V} \cdot |D(X)|^2$, while
if $|\zeta(\alpha, X)| > e^{V}$, then the left-hand
side is less than $$
|D(X)|^2 \cdot |\zeta(\alpha, X)|^{2(\ell+ 1)} \cdot e^{-2\ell V}
\leq k \cdot |\zeta(\alpha, X)|^{2(\ell + 1)} \cdot e^{-2\ell V},$$
using the
$L^{\infty}$ bound,
 $|D(X)|^2 \leq k \cdot \| \mathbf{c} \| \leq k$ coming from Cauchy-Schwarz.
Taking the expectation on both sides and using (\ref{First}) we get
\begin{equation} \label{Second}
\zeta(2\alpha) \cdot \frac{1}{k} \sum_{i, j \leq k} \frac{(n_i, n_j)^{2\alpha}}{(n_i n_j)^{\alpha}}
\leq
e^{2V} \cdot \| \mathbf{c} \|_2^2 + k \cdot \mathbb{E} [|\zeta(\alpha, X)|^{2\ell + 2}] \cdot e^{-2\ell V}.
\end{equation}
In the above equation, if $\alpha = 1$ then we let
$$
V = \log \log\log k + \gamma + 2/\psi(k) \text{ and }
\ell = \psi(k) \cdot \log k
$$
with $\psi(k) \rightarrow \infty$ very slowly as $k \rightarrow \infty$
(say $\psi(k) = \log\log\log k$). Otherwise, we let
\begin{align*}
V = \begin{cases}
(C(\alpha) \log k)^{1 - \alpha} \cdot (\log\log k)^{-\alpha} &  
\text{ if } \tfrac 12 < \alpha < 1 \\
(C(\tfrac 12) \log k \log\log k)^{1/2}, & \text{ if } \alpha = \tfrac 12 +
\tfrac{1}{\log k}
\end{cases} \text{ with }  \ell = \frac{\log k}{V}.
\end{align*}
With this choice of parameters we have
$\mathbb{E} [|\zeta(\alpha, X)|^{2\ell + 2} ] \cdot e^{-2\ell V} \ll k^{-1}$ for
a fixed $1/2 < \alpha \leq 1$ and for $\alpha = 1/2 + 1/\log k$. 
If $1/2 < \alpha \leq 1$ is fixed, then inserting the choice of
$\ell$ and $V$ made above into (\ref{Second}) gives the claim. 
In order to prove the claim for $\alpha = \tfrac 12$, we use Holder's inequality,
$$
\frac{1}{k} \sum_{i,j \leq k} \frac{(n_i,n_j)}{\sqrt{n_i n_j}}
\leq \Big ( \frac{1}{k} \sum_{i,j \leq k} \frac{(n_i,n_j)^{2\alpha}}{(n_i n_j)^{\alpha}}
\Big )^{1/(2\alpha)} \cdot k^{1 - 1/(2\alpha)}
$$
with $\alpha = 1/2 + 1/\log k$, and appeal to (\ref{Second})
with the choice of parameters as described above.
The result for $0 < \alpha \leq \tfrac 12$ follows
 in the same manner by interpolating with the case $\alpha = \tfrac 12$ using
Holder's inequality.

\section{Proof of Corollary 1}
We have already established the upper bound in Theorem \ref{thm:asy}.
Therefore it suffices to obtain the lower bound.
Let $\mathcal{P}(r,\ell) = p_1^{\ell - 1} \cdot \ldots \cdot p_r^{\ell - 1}$
where $p_1, p_2, \ldots$ are consecutive primes.
G\'al proves the following identity in \cite{Gal},
$$
\sum_{\substack{n_i, n_j | \mathcal{P}(r,\ell)}}
\frac{(n_i, n_j)^2}{n_i n_j}  = \prod_{p | p_1 \ldots p_r}
\Big ( \ell + 2 \sum_{v = 1}^{\ell - 1} \frac{\ell - v}{p^{v}}
\Big )
$$
where the summation goes over all $n_i$ and $n_j$ dividing
$\mathcal{P}(r,\ell)$\footnote{This identity can be also quickly checked using the
fact that $f(m,n) = (m,n)^2 / (mn)$ is a multiplicative
function of two variables}. The number of divisors of $\mathcal{P}(r,\ell)$ is $ \ell^{r}$.
 Let $r$ be the largest $r$ such that $(r + \log k)^r < k$.
Therefore
$r \sim \log k / \log\log k$ and $p_r \sim \log k$ by the prime
number theorem. Pick an integer $i$ such that
$$
(r + i)^{r} < k < (r + i + 1)^{r}.
$$
Since $(r + \log k + 1)^{r + 1} > k$ but
$(r + \log k)^{r} < k$ it follows that $i \asymp \log k$.
In particular $(r + i)^{r} \sim k$ as $k \rightarrow \infty$.
Set $\ell = r + i$ and let $\mathcal{N} = \{n_1 , \ldots, n_k\}$ be
a set containing the $(r + i)^{r} < k$ divisors of $\mathcal{P}(r,\ell)$
and $k - (r + i)^{r}$ other integers picked at random.
 Then according to G\'al's identity, highlighted above, the GCD sum
$$
\frac{1}{k}
\sum_{\substack{i, j \leq k \\ n_i, n_j \in \mathcal{N}}} \frac{(n_i, n_j)^2}{n_i n_j}
$$
is at least,
\begin{align*}
&\geq (r+i)^r \prod_{p < p_r} \Big ( 1 + 2 \sum_{v = 1}^{r + i - 1} \frac{1}{p^v}
\cdot \Big ( 1 - \frac{v}{r + i} \Big ) \Big ) \\ & > (1 + o(1)) k
\prod_{p < p_r} \Big (1 - \frac{1}{p^2} \Big )^{-2}
\times
\prod_{p < p_r} \big (1 + 2 \sum_{v = 1}^{r + i - 1}
\frac{1}{p^v} \cdot\Big ( 1 - \frac{v}{r + i} \Big )
\Big ) \Big (1 - \frac{1}{p} \Big )^2
\end{align*}
By Merten's theorem the first product is asymptotically
equal to $(e^{\gamma} \log p_r)^2 \sim (e^\gamma \log \log k)^2$
as $k \rightarrow \infty$. On the other the second product converges
as $k \rightarrow \infty$ to
$$
\prod_{p} \Big (1 + 2 \sum_{v = 1}^{\infty} \frac{1}{p^v}
\Big ) \Big ( 1 - \frac{1}{p} \Big )^2 = \frac{6}{\pi^2}.
$$
Combining these two observations the claim follows.

\section{Carleson-Hunt bounded variation}
We now turn our attention to the proof of Theorem \ref{thm:BVmax}. Our argument will depend on the Carleson-Hunt theorem \cite{CarlesonHunt}, stated below.
\begin{proposition}
There exists an absolute constant $c > 0$ such that
$$
\int_{\mathbb{T}} \Bigg ( \max_{1 \leq M \leq N} \Bigg | \sum_{k = 1}^{M}
c(k) e(k x) \Bigg | \Bigg )^2 dx \leq c \sum_{k = 1}^{N} |c(k)|^2
$$
for any finite sequence $(c(k))$.
\end{proposition}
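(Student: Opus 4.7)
The statement is the $L^{2}$ Carleson--Hunt theorem for finite trigonometric polynomials. The plan is to prove the equivalent $L^{2}$ boundedness of the linearized Carleson operator
$$
\mathcal{C}_{N(\cdot)} f(x) := \sum_{k=1}^{N(x)} c(k)\, e(kx)
$$
uniformly in any measurable selector $N: \mathbb{T} \to \{1, \ldots, N\}$; choosing $N(x)$ to realize the maximum at each point $x$ then recovers the stated inequality, with $f(x) := \sum_{k=1}^{N} c(k) e(kx)$ playing the role of the underlying polynomial and $\|f\|_{2}^{2} = \sum_{k=1}^{N} |c(k)|^{2}$ by Parseval.

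My approach is the time--frequency (Lacey--Thiele) reformulation of Carleson's argument. First, by conjugating the partial-sum projection with the modulations $e(\pm N(x)\cdot)$, reduce the problem to the $L^{2}$ boundedness of a modulated Hilbert-transform maximal operator. Second, decompose this operator by Littlewood--Paley in frequency and a dyadic partition in space to arrive at a model sum
$$
T_{N(\cdot)} f(x) = \sum_{p} \langle f, \phi_{p}^{1} \rangle\, \phi_{p}^{2}(x)\, \mathbf{1}_{N(x) \in \omega_{p}}
$$
indexed by dyadic phase-space tiles $p = I_{p} \times \omega_{p}$ of unit area, with smooth wave packets $\phi_{p}^{i}$ adapted to $p$. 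Third, organize the tiles into \emph{trees} sharing a common top tile, classified by decreasing \emph{size} (an $\ell^{2}$-mass parameter for the coefficients within the tree) and \emph{density} (the proportion of $I_{p}$ on which $N(x) \in \omega_{p}$). Fourth, prove the tree lemma: the restriction of $T_{N(\cdot)}$ to a single tree is bounded in $L^{2}$ by the product of its size, the square root of its density, and the square root of the measure of its spatial support. Fifth, sum over trees via a John--Nirenberg / exceptional-set argument that limits how many trees of fixed size and density can overlap in physical space.

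The main obstacle is the tree lemma together with the combinatorial tile-selection scheme. The tree lemma rests on a delicate $TT^{*}$/Bessel-type near-orthogonality argument between wave packets that share a common spatial interval but occupy disjoint frequency intervals, where subtle boundary effects of the frequency decomposition must be controlled. The accompanying tree-counting step must deliver geometric decay in both size and density so that the resulting geometric series converges against the trivial $L^{2}$ bound on the individual trees; this is what forces the introduction of two separate parameters and of an exceptional-set construction in physical space. This combinatorial core is exactly what makes Carleson's original proof (and Hunt's extension to $L^{p}$) long and intricate, and I would not attempt to shorten it — rather, I would organize the estimate within the Lacey--Thiele framework and quote the tree and counting lemmas as the essential technical inputs.
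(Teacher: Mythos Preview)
The paper does not prove this proposition at all: it is stated as the classical Carleson--Hunt theorem and immediately attributed to \cite{CarlesonHunt}, then used as a black box in the proof of Theorem~\ref{thm:BVmax}. So there is no ``paper's own proof'' to compare against.

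Your proposal is a reasonable high-level outline of the Lacey--Thiele time--frequency proof, and nothing in it is wrong as a roadmap. But note that by your own account you would ``quote the tree and counting lemmas as the essential technical inputs'' --- which is to say, you are also citing the hard core of Carleson's theorem rather than proving it. In effect your proposal and the paper's treatment are equivalent in logical content: both invoke the Carleson--Hunt maximal inequality as an established result. The difference is only presentational --- the paper cites the theorem in one line, while you sketch the architecture of one modern proof before deferring to its key lemmas. For the purposes of this paper, where the proposition serves purely as an input to Theorem~\ref{thm:BVmax}, the one-line citation is the appropriate choice; a several-page reconstruction of Lacey--Thiele would be out of proportion with the rest of the argument.
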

We start by writing $f$ as a Fourier series
$$f(x) = \sum_{j \in \mathbb{Z} } a(j) e(jx),$$
where we have the inequality $|a(j)| \ll (1+|j|)^{-1}$. Next we split the Fourier series of $f$ into two parts based on the factorization of the $j$. Here $A$ denotes a large real constant to be specified later and $P^{+}(j)$ corresponds to the largest prime factor of $|j|$. Let
$$r(x) := \sum_{P^{+}(j) > (\log(N))^{2A+2} } a(j) e(jx), \hspace{1cm} p(x) := \sum_{P^{+}(j) \leq   (\log(N))^{2A+2} } a(j) e(jx)$$
so that $f(x) = p(x) + r(x)$. It suffices to prove, for $g(x) \in \{p(x),r(x)\}$ the inequality:
\begin{equation}\label{eq:maxP} \int_{\mathbb{T}} \left( \max_{1\leq M \leq N} \left| \sum_{k=1}^{N} c(k) g(n_k x) \right|  \right)^2 dx \ll   \left( \log\log (N) \right)^2 \sum_{k=1}^{N} |c(k)|^2.
\end{equation}
For $g(x) =p(x)$ we may write the square root of (\ref{eq:maxP}) as
$$\left( \int_{\mathbb{T}} \left( \max_{1\leq M \leq N} \left| \sum_{k=1}^{N} \sum_{P^{+}(j) \leq   (\log(N))^{2A+2} }  c(k) a(j) e(j n_k x) \right|  \right)^2 dx\right)^{1/2}  $$
$$\ll  \sum_{P^{+}(j) \leq   (\log(N))^{2A+2} }  |a(j)| \left( \int_{\mathbb{T}} \left( \max_{1\leq M \leq N} \left| \sum_{k=1}^{N} c(k)  e(j n_k x) \right|  \right)^2 dx \right)^{1/2}.$$
Applying the classical Carleson-Hunt inequality this is bounded by
$$ \ll  \left(\sum_{k=1}^{N} |c(k)|^2 \right)^{1/2}  \sum_{P^{+}(j) \leq   (\log(N))^{2A+2} }  |a(j)|$$
and it remains to notice that
\begin{align*}
\sum_{P^{+}(j) \leq   (\log(N))^{2A+2} } |a(j)| & \ll \sum_{P^{+}(j) \leq   (\log(N))^{2A+2} } j^{-1} \\ & = \prod_{p \leq  (\log(N))^{2A+2} } \frac{1}{ 1 - p^{-1}} \ll \log\log N
\end{align*}
by Merten's theorem. This completes the analysis of (\ref{eq:maxP}).

We now consider the left side of (\ref{eq:maxP}) with $g(x)=r(x)$. The key ingredient in the analysis of (\ref{eq:maxP}) will be the following almost orthogonality property of the functions $r(n_k x)$, which will be proved shortly.
\begin{lemma}\label{lem:orthoR}With the notation and conditions stated above, if $I=[M_1,M_2] \subseteq [1,N]$ then
$$ \int_{\mathbb{T}} \left( \sum_{k \in I} c(k) r(n_k x) \right)^2 dx \ll \frac{\left( \log \log  N  \right)^2 }{ \left( \log  N  \right)^{2A}} \sum_{k \in I} |c(k)|^2 . $$
\end{lemma}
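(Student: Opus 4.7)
The plan is to mirror the random-model proof of Theorem \ref{thm:spec}, rewriting the $L^2$ integral as an expectation that can be controlled via the truncation identity (\ref{Secondeq}). Set $Y = (\log N)^{2A+2}$; expanding via Fourier gives
$$
\int_{\mathbb{T}} \left| \sum_{k \in I} c(k) r(n_k x) \right|^2 dx
= \sum_{k,\ell \in I} c(k) \overline{c(\ell)}
\sum_{\substack{j_1 n_k = j_2 n_\ell \\ P^+(j_1), P^+(j_2) > Y}}
a(j_1) \overline{a(j_2)}.
$$
Using $|a(j)| \ll |j|^{-1}$ to drop the phases of $a(j)$ (the \emph{majorant principle}), together with the relation $\mathbb{E}[X(a)\overline{X(b)}] = \delta_{a,b}$ from Section 3, a direct expansion shows that this is bounded, up to a constant, by $\mathbb{E}[|R(X)|^2 |\tilde D(X)|^2]$, where
$$
R(X) = \sum_{\substack{j \geq 1 \\ P^+(j) > Y}} \frac{X(j)}{j},
\qquad
\tilde D(X) = \sum_{k \in I} |c(k)|\, X(n_k).
$$

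Next apply (\ref{Secondeq}): for any $V \in \mathbb{R}$ and integer $\ell \geq 0$ one has $|R\tilde D|^2 \leq e^{2V}|\tilde D|^2 + |\tilde D|^2 |R|^{2\ell + 2} e^{-2\ell V}$, hence
$$
\mathbb{E}[|R\tilde D|^2] \leq e^{2V}\|c\|_2^2 + N\|c\|_2^2\, e^{-2\ell V}\, \mathbb{E}[|R|^{2\ell + 2}],
$$
using $|\tilde D(X)|^2 \leq N\|c\|_2^2$ and $\mathbb{E}[|\tilde D|^2] = \|c\|_2^2$. The key point for estimating $\mathbb{E}[|R|^{2\ell}]$ is the factorization $R(X) = \zeta_{\leq Y}(1, X)\bigl(\zeta_{>Y}(1, X) - 1\bigr)$, where $\zeta_{\leq Y}$ and $\zeta_{>Y}$ denote the partial Euler products over primes $p \leq Y$ and $p > Y$, respectively. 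These factors depend on disjoint collections of the $X(p)$'s and are therefore \emph{independent}, so $\mathbb{E}[|R|^{2\ell}] = \mathbb{E}[|\zeta_{\leq Y}|^{2\ell}] \cdot \mathbb{E}[|\zeta_{>Y} - 1|^{2\ell}]$. The first factor obeys the bound of Lemma 7 with the product restricted to $p \leq Y$, namely $\exp(2\ell \log\log\min(\ell, Y) + O(\ell))$. For the second, write $\zeta_{>Y} = e^L$ with $L = -\sum_{p > Y}\log(1 - X(p)/p)$: here $\mathbb{E}[L] = 0$ and $\mathrm{Var}(L) \ll 1/(Y\log Y)$, and Gaussian-type moment estimates yield $\mathbb{E}[|\zeta_{>Y} - 1|^{2\ell}] \ll (C\ell/(Y\log Y))^{\ell}$ in the range $\ell \ll Y$.

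Finally, take $V = \log\log\log N - A\log\log N$ so that $e^{2V}\|c\|_2^2 = (\log\log N)^2/(\log N)^{2A}\,\|c\|_2^2$ (matching the target), and $\ell \asymp \log N$. Combining the two moment bounds yields $\mathbb{E}[|R|^{2\ell}] \ll (\log N)^{-(2A+1)\ell\,(1+o(1))}$, so that after multiplication by $Ne^{-2\ell V} = N(\log N)^{2\ell A}/(\log\log N)^{2\ell}$ the second term decays faster than any negative power of $\log N$ and is absorbed by the first. The main technical obstacle will be the high-moment estimate for $\zeta_{>Y} - 1$: one needs concentration of roughly Gaussian type (driven by the small variance $\mathrm{Var}(L) \ll 1/(Y\log Y)$ of the log of the rough Euler product) that is sharp enough to overcome the steep $e^{-2\ell V}$ factor at $\ell \asymp \log N$.
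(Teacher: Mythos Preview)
Your approach is correct and reaches the same bound, but it is organised differently from the paper's argument, and the comparison is instructive.

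The paper first uses the majorant principle (its Lemma 12) to replace the rough sum $R(X)=\sum_{P^{+}(n)>Y}X(n)/n$ by $P(X)\cdot\zeta(1,X)$ with $P(X)=\sum_{p>Y}X(p)/p$, and then applies the threshold splitting to $|P|^{2}$ (not to $|R|^{2}$). The main term becomes $(\log N)^{-2A}\,\mathbb{E}[|\zeta(1,X)|^{2}|\tilde D|^{2}]$, and the $(\log\log N)^{2}$ factor is supplied by re-running the proof of Theorem~\ref{thm:spec}. The error term is handled via Cauchy--Schwarz and the elementary prime-moment bound $\mathbb{E}[|P|^{2m}]\ll m!\,(\sum_{p>Y}p^{-2})^{m}$ (Lemma 13).

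Your route thresholds $|R|^{2}$ directly and then uses the exact identity $R=\zeta_{\le Y}\cdot(\zeta_{>Y}-1)$ together with \emph{independence} of the two Euler factors to factor the high moments. This is more elegant in two respects: you never invoke the majorant lemma, and the $(\log\log N)^{2}$ in the main term comes for free from the choice of $V$ rather than from a second appeal to the G\'al argument. The price is that you must control $\mathbb{E}[|\zeta_{>Y}-1|^{2\ell}]$, which is a bit less standard than the prime-sum moment of Lemma 13. Your claimed bound $(C\ell/(Y\log Y))^{\ell}$ is correct in the needed range $\ell\ll Y$: writing $\zeta_{>Y}=e^{L}$ and using $|e^{L}-1|\le|L|e^{|L|}$, Hoeffding-type sub-Gaussian bounds for $L=\sum_{p>Y}(-\log(1-X(p)/p))$ (independent summands, mean zero, $|L_{p}|\le 2/p$, variance $\asymp 1/(Y\log Y)$) give $\mathbb{E}[|L|^{4\ell}]^{1/2}\ll (C\ell/(Y\log Y))^{\ell}$ and $\mathbb{E}[e^{4\ell|L|}]=O(1)$, whence the estimate follows by Cauchy--Schwarz. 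With $\ell\asymp\log N$ and $Y=(\log N)^{2A+2}$ the error term is then dominated as you indicate.

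In short: both proofs hinge on the smallness of the ``large-prime'' part, but the paper isolates a \emph{prime sum} $P$ via a majorant and uses Lemma 13, whereas you isolate the \emph{rough Euler product} $\zeta_{>Y}-1$ via an exact factorisation and use sub-Gaussian concentration. Either works; yours is slightly cleaner structurally, at the cost of the moment estimate you correctly flagged as the main technical step.
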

Assuming Lemma \ref{lem:orthoR} for the moment, we may deduce a maximal version of this inequality at the expense of an additional factor of $\log N$ using a  Radamacher-Menshov-type argument inequality (see \cite{LewkoLewko} for a systematic discussion of this technique). This lemma will then imply (\ref{eq:maxP}) with $g(x)=r(x)$ for fixed $A>1$.
\begin{lemma}With the notation and conditions stated above,
$$\int_{\mathbb{T}} \left( \max_{1\leq M \leq N} \left| \sum_{k=1}^{M} c(k) r(n_k x) \right|  \right)^2 dx \ll \log^2 N  \cdot \frac{ \left( \log\log  N  \right)^2}{\log^{2A} N } \sum_{k=1}^{N} |c(k)|^2.$$
\end{lemma}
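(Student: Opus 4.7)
The plan is to upgrade Lemma \ref{lem:orthoR} to a maximal inequality using the standard Rademacher--Menshov dyadic decomposition, which costs a single extra factor of $\log N$ from Cauchy--Schwarz and another from summing over dyadic scales, for a total loss of $\log^2 N$.

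First I would assume without loss of generality that $N = 2^L$ with $L = \lceil \log_2 N\rceil$, padding the sequence $c(k)$ with zeros. For each level $0 \leq \ell \leq L$, consider the dyadic partition of $[1,N]$ into $2^{L-\ell}$ consecutive intervals of length $2^{\ell}$; call the family of all such intervals $\mathcal{D}_\ell$, and let $\mathcal{D} = \bigcup_{\ell=0}^{L} \mathcal{D}_\ell$. The key combinatorial fact is that for every $M \in [1,N]$ the interval $[1,M]$ can be written as a disjoint union of at most $L+1$ intervals from $\mathcal{D}$, one from each level.

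Next, writing $S_I(x) := \sum_{k \in I} c(k) r(n_k x)$, the decomposition above combined with the Cauchy--Schwarz inequality gives
$$\left| \sum_{k=1}^{M} c(k) r(n_k x) \right|^2 \;\leq\; (L+1) \sum_{\ell = 0}^{L} |S_{I_\ell(M)}(x)|^2,$$
where $I_\ell(M) \in \mathcal{D}_\ell$ is the (unique or absent) dyadic piece used at level $\ell$. Taking the maximum over $M$ and bounding each term by a sum over all possible dyadic pieces at that level yields
$$\max_{1 \leq M \leq N} \left| \sum_{k=1}^{M} c(k) r(n_k x) \right|^2 \;\leq\; (L+1) \sum_{\ell = 0}^{L} \sum_{I \in \mathcal{D}_\ell} |S_I(x)|^2.$$

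Integrating in $x$ and applying Lemma \ref{lem:orthoR} to each $\int_{\mathbb{T}} |S_I(x)|^2 dx$ gives, for each fixed level $\ell$,
$$\sum_{I \in \mathcal{D}_\ell} \int_{\mathbb{T}} |S_I(x)|^2 dx \;\ll\; \frac{(\log\log N)^2}{(\log N)^{2A}} \sum_{I \in \mathcal{D}_\ell} \sum_{k \in I} |c(k)|^2 \;=\; \frac{(\log\log N)^2}{(\log N)^{2A}} \sum_{k=1}^{N} |c(k)|^2,$$
since $\mathcal{D}_\ell$ partitions $[1,N]$. Summing over the $L+1 \ll \log N$ levels and combining with the prefactor $L+1 \ll \log N$ from Cauchy--Schwarz produces the claimed $\log^2 N$ loss, finishing the proof.

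No step is really an obstacle here: Lemma \ref{lem:orthoR} is the content, and Rademacher--Menshov is a black-box device (the reference to \cite{LewkoLewko} in the excerpt confirms the authors intend it this way). The only thing to be slightly careful about is that the $O(\log N)$ factor from Cauchy--Schwarz is genuinely unavoidable in this argument, which is why the bound carries $\log^2 N$ rather than $\log N$; this is then absorbed by choosing $A$ sufficiently large in the overall proof of Theorem \ref{thm:BVmax}.
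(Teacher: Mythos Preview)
Your proposal is correct and follows essentially the same Rademacher--Menshov dyadic argument as the paper: decompose $[1,M]$ into $O(\log N)$ dyadic blocks, apply Cauchy--Schwarz for one $\log N$, then sum over the $O(\log N)$ levels (equivalently, use that each $k$ lies in $O(\log N)$ dyadic intervals) and invoke Lemma~\ref{lem:orthoR} on each block. The only cosmetic difference is that you organize the sum by level and use that each level partitions $[1,N]$, whereas the paper sums over all dyadic intervals at once and then counts multiplicities; the two are equivalent.
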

\begin{proof}Without loss of generality assume that $N = 2^n$ is a power of $2$. We consider the set of diadic subintervals of $[1,N]$:
$$\mathbb{D} := \{ [2^\ell (m-1), 2^{\ell-1}m) : 0 \leq \ell \leq n,  0 \leq m\leq 2^{n-\ell}  \}. $$
Let $1 \leq t(x) \leq N$ denote the length of the maximal partial sum at $x$. We may write the interval $[1,t(x)]$ as a disjoint union of at most $O(\log(N))$ diadic intervals (elements of $\mathbb{D}$). It follows, for fixed $x$, that for any $t(x) \leq N$ there exists a disjoint decomposition of
$[1,t(x)]$ into a union of $O(\log N)$ elements $\{\mathcal{D}_s^{(x)}\}_{s=1}^{\log N }$. Here $\mathcal{D}_s^{(x)}$ are disjoint dyadic intervals depending on $x$.  Hence
$$ \left| \sum_{k=1}^{t} c(k) r(n_k x) \right|^2 \leq \left( \sum_{s=1}^{\log N } \left| \sum_{k \in \mathcal{D}_s^{(x)} } c(k) r(n_k x) \right| \right)^2
\leq \log N \sum_{s=1}^{\log N}  \left| \sum_{k \in \mathcal{D}_s } c(k) r(n_k x) \right|^2 .$$
Summing over all dyadic intervals the dependence on $x$ may be removed. Indeed we have
$$\int_{\mathbb{T}} \left( \max_{1\leq M \leq N} \left| \sum_{k=1}^{N} c(k) r(n_k x) \right|  \right)^2  \ll \int_{\mathbb{T}}\log N  \sum_{\mathcal{D} \in \mathbb{D}} |\sum_{k \in \mathcal{D}}c(k) r(n_k x) |^2 $$
$$\ll \log N \sum_{\mathcal{D} \in \mathbb{D}} \int_{\mathbb{T}}|\sum_{k \in \mathcal{D}}c(k) r(n_k x) |^2dx.$$
Finally, Lemma \ref{lem:orthoR} combined with the observation that each integer $k$ occurs in $O(\log N )$ diadic intervals, allows us to bound this as
$$\ll \log^2 N  \frac{(\log\log N )^2}{\log^{2A}(N)} \sum_{k=1}^{N}|c(k) |^2.$$
This completes the proof.
\end{proof}
We need a simple majorant principle,
stated below.
\begin{lemma}
Let $c_k \geq 0$ be a non-negative sequence.
Given a sequence $a_n \geq 0$ and another sequence
$b_n \geq 0$ such that $a_n \leq b_n$ we have,
$$
\mathbb{E} \Bigg [ \Big | \sum_{n} a_n X(n) \Big |^2
\cdot \Big | \sum_{n} c_n X(n) \Big |^2 \Bigg ]
\leq
\mathbb{E}
\Bigg [ \Big | \sum_{n} b_n X(n) \Big |^2
\cdot \Big | \sum_{n} c_n X(n) \Big |^2 \Bigg ].
$$
\end{lemma}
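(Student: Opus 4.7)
The plan is to expand both expectations by multilinearity and exploit the fact that, since the $X(p)$ are independent and uniform on $\mathbb{T}$, only very specific monomials in the $X(n)$'s survive under $\mathbb{E}$. Concretely, for any four integers one has
$$
\mathbb{E}\bigl[X(n_1)\overline{X(n_2)}X(n_3)\overline{X(n_4)}\bigr]
=\prod_{p}\mathbb{E}\bigl[X(p)^{v_p(n_1)+v_p(n_3)-v_p(n_2)-v_p(n_4)}\bigr],
$$
and each factor equals $1$ when the exponent vanishes and $0$ otherwise. Thus the full expectation equals $1$ precisely when $n_1 n_3 = n_2 n_4$, and vanishes otherwise.

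Armed with this, I would expand
$$
\mathbb{E}\Bigl[\Bigl|\sum_n a_n X(n)\Bigr|^2\Bigl|\sum_n c_n X(n)\Bigr|^2\Bigr]
=\sum_{n_1,n_2,n_3,n_4} a_{n_1}a_{n_2}c_{n_3}c_{n_4}\,\mathbb{E}\bigl[X(n_1)\overline{X(n_2)}X(n_3)\overline{X(n_4)}\bigr],
$$
which, by the previous paragraph, collapses to the diagonal sum
$\sum_{n_1 n_3=n_2 n_4} a_{n_1}a_{n_2}c_{n_3}c_{n_4}$. The same computation applied to the right-hand side replaces $a$ by $b$, so the difference of the two sides equals
$$
\sum_{n_1 n_3=n_2 n_4}\bigl(b_{n_1}b_{n_2}-a_{n_1}a_{n_2}\bigr)\,c_{n_3}c_{n_4}.
$$
Each summand is nonnegative: the hypothesis $0\le a_n\le b_n$ gives $a_{n_1}a_{n_2}\le b_{n_1}b_{n_2}$, while $c_{n_3},c_{n_4}\ge 0$ by assumption. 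This yields the claimed inequality.

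There is essentially no obstacle beyond this bookkeeping; the only subtlety is convergence of the infinite sums, and this can be addressed by first establishing the inequality for finite truncations of $(a_n),(b_n),(c_n)$ and then passing to the limit, using monotone convergence on the diagonal expansion on both sides. The nonnegativity of $a,b,c$ is crucial here, as it both guarantees that the surviving diagonal contributions are monotone in $a_n$ versus $b_n$ and eliminates any sign ambiguity in pairing $c_{n_3}\overline{c_{n_4}}$.
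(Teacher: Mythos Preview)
Your proof is correct and follows essentially the same approach as the paper. The only cosmetic difference is that the paper first multiplies the two Dirichlet series together, writing the product as $\sum_n \bigl(\sum_{n=k\ell} a_k c_\ell\bigr) X(n)$ and then applying the two-variable orthogonality $\mathbb{E}[X(n)\overline{X(m)}]=\delta_{n,m}$ to obtain $\sum_n \bigl(\sum_{n=k\ell} a_k c_\ell\bigr)^2$; your four-fold expansion with the constraint $n_1 n_3=n_2 n_4$ is the same identity unwound.
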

\begin{proof}
By non-negativity of $c_k, a_n$ and $a_n \leq b_n$,
\begin{align*}
& \mathbb{E} \Bigg [ \Big | \sum_{n} a_n X(n) \Big |^2 \cdot
\Big | \sum_{n} c_n X(n) \Big |^2 \Bigg ] = \sum_{n} \Big ( \sum_{n = k\ell}
a_k c_{\ell} \Big )^2 \\ & \leq
\sum_{n} \Big ( \sum_{n = k \ell} b_k c_{\ell} \Big )^2
= \mathbb{E} \Bigg [ \Big | \sum_{n} b_n X(n) \Big |^2 \cdot
\Big | \sum_{n} c_n X(n) \Big |^2 \Bigg ]
\end{align*}
as claimed.
\end{proof}
We will also need a simple moment calculation.
\begin{lemma}
For any integer $\ell \geq 1$, and any $y \leq z$, and $\sigma > 0$,
$$
\mathbb{E} \Bigg [ \Big | \sum_{y \leq p \leq z} \frac{ X(p)}{p^{\sigma}} \Big |^{2\ell}
\Bigg ] \ll \ell! \cdot \Big ( \sum_{y \leq p \leq z} \frac{1}{p^{2\sigma}} \Big )^{\ell}.
$$
\end{lemma}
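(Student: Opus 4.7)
The plan is to expand the $2\ell$-th moment combinatorially, use the orthogonality of the random variables $X(p)$, and compare the resulting diagonal sum to the natural upper bound coming from the multinomial expansion of $(\sum_p p^{-2\sigma})^{\ell}$.

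More precisely, write $S = \sum_{y \leq p \leq z} X(p)/p^{\sigma}$. Expanding $\mathbb{E}|S|^{2\ell} = \mathbb{E}[S^{\ell} \overline{S}^{\ell}]$ as a $2\ell$-fold sum over primes $p_1,\ldots,p_\ell$ and $q_1,\ldots,q_\ell$, the independence of the $X(p)$'s across primes combined with the fact that $\mathbb{E}[X(p)^a \overline{X(p)}^b] = \delta_{a,b}$ means that only those tuples for which the multiset $\{p_1,\ldots,p_\ell\}$ equals the multiset $\{q_1,\ldots,q_\ell\}$ contribute, and each such tuple contributes $+1$. Parametrizing by the common multiset with distinct primes $p_{i_1},\ldots,p_{i_r}$ occurring with multiplicities $m_1,\ldots,m_r$ (so $m_1+\cdots+m_r=\ell$), the number of ordered $\ell$-tuples producing it is $\ell!/(m_1!\cdots m_r!)$, and hence
$$
\mathbb{E}|S|^{2\ell} = \sum_{\substack{m_1,\ldots,m_r \geq 1 \\ m_1+\cdots+m_r=\ell}} \sum_{p_{i_1} < \cdots < p_{i_r}} \left(\frac{\ell!}{m_1!\cdots m_r!}\right)^2 \prod_{s=1}^{r} \frac{1}{p_{i_s}^{2\sigma m_s}}.
$$

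Next, I would bound one of the two multinomial coefficients by the trivial inequality $\ell!/(m_1!\cdots m_r!) \leq \ell!$, leaving
$$
\mathbb{E}|S|^{2\ell} \leq \ell! \sum_{\substack{m_1,\ldots,m_r \geq 1 \\ m_1+\cdots+m_r=\ell}} \sum_{p_{i_1} < \cdots < p_{i_r}} \frac{\ell!}{m_1!\cdots m_r!}\prod_{s=1}^{r} \frac{1}{p_{i_s}^{2\sigma m_s}}.
$$
The inner double sum is exactly the multinomial expansion of $\bigl(\sum_{y \leq p \leq z} p^{-2\sigma}\bigr)^{\ell}$, which yields the claimed inequality.

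There is no real obstacle; the only point to be careful about is that each unordered multiset must be counted once when one parametrizes by multiplicities, and that the bound $m_i! \geq 1$ is enough (one does not need any finer combinatorial estimate). The inequality is essentially the statement that Steinhaus random variables are hypercontractive on Wiener chaos of degree $\ell$ with constant $\sqrt{\ell!}$, and the proof above is the direct combinatorial incarnation of that fact.
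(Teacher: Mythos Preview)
Your argument is correct: the orthogonality relation $\mathbb{E}[X(p)^{a}\overline{X(p)}^{b}]=\delta_{a,b}$ forces the two multisets of primes to coincide, and the bound $\ell!/(m_1!\cdots m_r!)\le \ell!$ applied to one of the two multinomial coefficients reduces the remaining sum exactly to the multinomial expansion of $\bigl(\sum_{y\le p\le z}p^{-2\sigma}\bigr)^{\ell}$. The only cosmetic point is that the variable $r$ (the number of distinct primes in the multiset) is implicitly being summed over in your displayed formula; making that explicit would remove any ambiguity.

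As for comparison with the paper: the paper does not supply its own proof of this lemma but simply refers to \cite[Lemma~3.2]{LLR}. Your direct combinatorial expansion is precisely the standard proof of this moment estimate for Steinhaus random variables, and is almost certainly what appears in the cited reference as well.
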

\begin{proof}
See \cite[Lemma 3.2]{LLR}.
\end{proof}

We are now ready to prove Lemma 10.

\begin{proof}[Proof of Lemma 10]
Let $J = (\log N)^{2A + 2}$.
Expanding $r$ and using orthogonality we get
\begin{align*}
\int_{0}^{1} \Bigg ( \sum_{k = M_1 + 1}^{M_2}
c(k) r(n_k x) \Bigg )^2 dx
= &  \frac{1}{2} \sum_{k_1,k_2 \in I} \sum_{\substack{P^{+}(j_1) > J \\
P^{+}(j_2) > J\\j_1 n_{k_1} = j_2 n_{k_2} }} c(k_1)c(k_2) a(j_1)a(j_2)
\\ \ll & \sum_{k_1, k_2 \in I}
\sum_{\substack{P^{+}(j_1) > J \\ P^{+}(j_2) > J \\ j_1 n_{k_1} = j_2 n_{k_2}}}
|c(k_1)|\cdot |c(k_2)| \cdot \frac{1}{j_1 j_2}
\end{align*}
since $|a(n)| \ll n^{-1}$.
The condition $j_1 n_{k_1} = j_2 n_{k_2}$ can be
expressed as
$$\mathbb{E} [X(j_1)X(n_{k_1})\overline{X(j_2)X(n_{k_2})}]$$
and therefore we re-write the previous sum as
$$
\mathbb{E} \Bigg [ \Big | \sum_{P^{+}(n) > J} \frac{X(n)}{n} \Big |^2
\cdot \Big | \sum_{n} |c(n)| X(n) \Big |^2 \Bigg ].
$$
If $P^+(n) > J$ then $n$ can be written as $n = p m$ with $p > J$ a prime.
Therefore by the majorant principle the above expression is less than or equal to
\begin{equation} \label{tobound}
\mathbb{E} \Bigg [ \Big | \sum_{p > J} \frac{X(p)}{p}
\cdot \zeta(1,X) \Big |^2 \cdot \Big | \sum_{n} |c(n)| X(n) \Big |^2
\Bigg ].
\end{equation}
Proceeding as in our proof of G\'al's theorem, for any choice of positive
integer $\ell > 0$ we have,
\begin{equation} \label{bounder}
\Big | \sum_{p > J} \frac{X(p)}{p} \Big |^2 < \frac{1}{(\log N)^{2A}} +
(\log N)^{2A\ell} \cdot \Big | \sum_{p > J} \frac{X(p)}{p} \Big |^{2(\ell + 1)}.
\end{equation}
The contribution of $(\log N)^{-2A}$ to (\ref{tobound}) is
$$
\frac{1}{(\log N)^{2A}} \cdot \mathbb{E} \Big [ |\zeta(1, X)|^2
\cdot \Big | \sum_{n} |c(n)| X(n) \Big |^2 \Big ]
\ll \frac{(\log\log N)^{2}}{(\log N)^{2A}} \cdot \sum_{n} |c(n)|^2
$$
as is seen by following the same steps as in our proof of G\'al's theorem. On the other hand the contribution
of $(\log N)^{2A\ell} \cdot |\sum_{p > J} X(p) / p|^{2(\ell+1)}$ to
(\ref{tobound}) is bounded by
\begin{align*}
\ll & (\log N)^{2A\ell} \cdot \mathbb{E} \Bigg [ \Big | \sum_{p > J} \frac{X(p)}{p}
\Big |^{2(\ell+1)} \cdot |\zeta(1, X)|^2 \cdot \Big | \sum_{n} |c(n)| X(n)
\Big |^2 \Bigg ] \\
\leq & (\log N)^{2A\ell} \cdot N \Big ( \sum_{n} |c(n)|^2 \Big ) \cdot
\mathbb{E} \Bigg [ \Big | \sum_{p > J} \frac{X(p)}{p}
\Big |^{2(\ell+1)} \cdot |\zeta(1, X)|^2 \Bigg ].
\end{align*}
By Cauchy-Schwarz and Lemma 13 the expectation is less than
\begin{align*}
\mathbb{E} \Big [ |\zeta(1, X)|^4 \Big ]^{1/2}
\cdot & \mathbb{E} \Bigg [ \Big | \sum_{p > J } \frac{X(p)}{p}
\Bigg |^{4(\ell+1)}  \Bigg ]^{1/2} \ll \\ & \ll 1 \cdot \Big ( (2(\ell+1))! \cdot \Big ( \sum_{p > J} \frac{1}{p^2}
\Big )^{2(\ell+1)} \Big )^{1/2} \ll \Big ( \frac{4\ell}{J} \Big )^{\ell}.
\end{align*}
It follows that the total contribution obtained by inserting (\ref{bounder}) into (\ref{tobound}) is
$$
\ll \frac{(\log\log N)^{2}}{(\log N)^{2A}} \sum_{n} |c(n)|^2
+ \Big ( \frac{4\ell (\log N)^{2A}}{J} \Big )^{\ell}
\cdot N \sum_{n} |c(n)|^2.
$$
Since $J = (\log N)^{2A + 2}$ choosing $\ell = \log N$ we see that the second
term is negligible compared to the first, and we've obtained the
desired bound.

\end{proof}

\section{Proof of Corollary \ref{cor:AE} and Corollary \ref{cor:EK}}

Corollary \ref{cor:AE} and Corollary \ref{cor:EK} follow immediately from the improved
Carleson-Hunt inequality established in Theorem \ref{thm:BVmax}. We refer the
readers to \cite[Proof of Theorem 2 and 3]{ABS} for the details of this
standard
deduction.

\section{Discussion of the connection with the maximal size of $\zeta(s)$}

Our bound for G\'al's theorem essentially corresponds to
$
(1/\zeta(2\sigma)) \cdot e^{2V(\sigma)}
$
where $V(\sigma)$ is the largest $V$ such that
$
\mathbb{P}(\log |\zeta(\sigma, X)| > V) < k^{-1-\varepsilon}.
$
We expect this $V$ to coincide with
$$
\sup_{|t| < k} \log |\zeta(\sigma + it)|.
$$
The reason for this is the following: if the maximum value $M$ of $|\zeta(\sigma + it)|$, $|t| < k$, is attained at $t = t_0$, then we have
$$
|\zeta(\sigma + it_0 + i\varepsilon)| > M/2
$$
for $|\varepsilon| \ll 1 /\log k$ (see \cite{GonekFarmer}). Thus if the probabilistic model
predicts that $|\zeta(\sigma + it)| > e^{V}$ on a set of measure at most $k^{-\varepsilon}$ for $|t| < k$,  then we expect
$|\zeta(\sigma + it)| < e^{V}$ to hold for all $|t| < k$.

We do not expect our approach to deliver sharp estimates for the constants
$C(\alpha)$ as defined in Theorem \ref{thm:spec}. However, for future reference we note
that it seems that with more work one can show that
for $1/2 < \alpha < 1$ we have
$$C(\alpha) = G_1(\alpha) \alpha^{-2\alpha} (1 - \alpha)^{\alpha - 1} + o(1)$$
as $k \rightarrow \infty$,
with $$G_1(\alpha) = \int_{0}^{\infty} \log I_0(u) u^{-1-1/\sigma} du$$
and $$I_0(u) = \sum_{n = 0}^{\infty} (u/2)^{2n}/ n!^2$$ the modified Bessel function of order 0. This is suggested by Lamzouri's conjecture (see Remark 2 after Theorem 1.5 in \cite{Youness})
as to the
order of
$$
\sup_{|t| < k} \log |\zeta(\sigma + it)|.
$$
His conjecture is made through an analysis of the random model
$\zeta(\sigma, X)$, as described above (see also \cite{GonekFarmer}).

\section{Acknowledgements}
The first author would like to thank Jeff Vaaler for first bringing G\'al's theorem to his attention and many discussions on it and related topics.

\bibliography{GCD}
\bibliographystyle{plain}

\end{document}